\documentclass[12pt]{amsart}
\usepackage{amssymb}
\usepackage{stmaryrd}
\usepackage{eucal}
\usepackage{longtable}
\usepackage{hyperref}
\usepackage[nobysame]{amsrefs}
\usepackage[total={6.25truein,9truein},centering]{geometry}

\allowdisplaybreaks

\newtheorem{theorem}{Theorem}[section]
\newtheorem{lemma}[theorem]{Lemma}
\newtheorem{proposition}[theorem]{Proposition}
\newtheorem{corollary}[theorem]{Corollary}

\theoremstyle{definition}

\theoremstyle{remark}
\newtheorem{example}[theorem]{Example}
\newtheorem{remark}[theorem]{Remark}

\newcommand{\FF}{\mathbb{F}}
\newcommand{\ZZ}{\mathbb{Z}}
\newcommand{\QQ}{\mathbb{Q}}
\newcommand{\RR}{\mathbb{R}}

\newcommand{\GG}{\mathbb{G}}
\newcommand{\HH}{\mathbb{H}}

\newcommand{\CC}{\mathbb{C}}

\newcommand{\cE}{\mathcal{E}}
\newcommand{\cF}{\mathcal{F}}
\newcommand{\cG}{\mathcal{G}}
\newcommand{\cL}{\mathcal{L}}

\newcommand{\fa}{\mathfrak{a}}
\newcommand{\fn}{\mathfrak{n}}

\DeclareMathOperator{\ord}{ord}

\DeclareMathOperator{\im}{Im}
\DeclareMathOperator{\re}{Re}

\newcommand{\ogamma}{\overline{\gamma}}

\newcommand{\opsi}{\mkern2.5mu\overline{\mkern-2.5mu \psi}}

\newcommand{\oQ}{\overline{Q}}

\newcommand{\tor}{\mathrm{tor}}

\newcommand{\Gahat}{\widehat{\GG}_{\mathrm{a}}}

\newcommand{\power}[2]{{#1 [\![ #2 ]\!]}}
\newcommand{\laurent}[2]{{#1 (\!( #2 )\!)}}

\newcommand{\assign}{\mathrel{\vcenter{\baselineskip0.5ex \lineskiplimit0pt
                     \hbox{\scriptsize.}\hbox{\scriptsize.}}}%
                     =}
\newcommand{\rassign}{=%
                     \mathrel{\vcenter{\baselineskip0.5ex \lineskiplimit0pt
                     \hbox{\scriptsize.}\hbox{\scriptsize.}}}%
                     }

\begin{document}

\title{A note on log-algebraicity on elliptic curves}

\author{Wei-Cheng Huang}
\address{Department of Mathematics, Texas A{\&}M University, College Station, TX 77843, U.S.A.}
\email[W.-C.~Huang]{wchuang@tamu.edu}

\author{Matthew Papanikolas}
\email[M.~Papanikolas]{papanikolas@tamu.edu}

\subjclass[2020]{Primary 11G05; Secondary 11G09, 14L05}

\date{April 8, 2022}

\begin{abstract}
We analyze log-algebraic power series identities for formal groups of elliptic curves over $\mathbb{Q}$ which arise from modular parametrizations. We further investigate applications to special values of elliptic curve $L$-functions.
\end{abstract}

\keywords{elliptic curves, formal groups, formal exponentials and logarithms, modular parametrizations, special $L$-values, log-algebraicity}


\maketitle

\section{Introduction} \label{S:Intro}

The notion of log-algebraicity was termed by Anderson~\cite{And94}, \cite{And96}, to describe certain power series identities in the context of exponential functions of Drinfeld modules and twisted harmonic series over global function fields. The basic motivating example from characteristic~$0$ is the familiar formal power series identity for the multiplicative group,
\begin{equation}
\exp \Biggl( -\sum_{n=1}^{\infty} \frac{t^n}{n} \Biggr) = 1-t.
\end{equation}
If we take $\beta = \sum_{k=-d}^d m_k u^k \in \ZZ[u,u^{-1}]$, then only slightly more complicated is that
\begin{equation}
\exp \Biggl( -\sum_{n=1}^{\infty} \frac{\beta(u^n)}{n}\, t^n \Biggr) = \prod_{k=-d}^d \Bigl( 1 - u^k t \Bigr)^{m_k}.
\end{equation}
These inner harmonic sums are thus ``log-algebraic,'' as one obtains polynomial, rational, or algebraic power series upon exponentiation. Specializations of these identities can be used to recover classical formulas for Dirichlet $L$-functions at $s=1$, as in~\cite{Washington}*{Thm.~4.9}.

Anderson extended these types of identities to sign-normalized Drinfeld modules over function fields in positive characteristic, based on special identities for Carlitz-Goss zeta values due to Thakur~\cite{Thakur92}. For example, Anderson~\cite{And96}*{Thm.~3} showed for the Carlitz module $C$ over $A=\FF_q[\theta]$, $K=\FF_q(\theta)$, that in $\power{K[u]}{t}$ we have
\begin{equation}
\exp_C \Biggl( \sum_{a \in A_+} \frac{\beta(C_a(u))}{a}\, t^{q^{\deg a}} \Biggr) \in A[u,t],
\end{equation}
where $\exp_C$ is the Carlitz exponential, $A_+$ denotes the monic elements of $A$, $\beta \in A[u]$ is fixed, and $C_a(u) \in A[u]$ represents multiplication by~$a$ on $C$.  Anderson used these identities to express special values of Goss $L$-series of Dirichlet type in terms of Carlitz logarithms of special points, which themselves arise from a theory of circular units for~$C$. Log-algebraic identities have been widely studied in function field arithmetic in recent years, extending Anderson's results to analogues of Stark units and abelian $L$-series, Drinfeld modules of arbitrary ranks, and certain Anderson $t$-modules (e.g., see \cite{AndThak90}--\cite{AnglesTavares17}, \cite{CEP18}, \cite{GreenNgoDac20b}, \cite{GreenP18}, \cite{P22}, \cite{Thakur}*{Ch.~8}). These identities are also closely connected to Taelman's work on special values of Goss $L$-series for Drinfeld modules~\cite{Taelman12}. \nocite{AnglesNgoDacTavares20} \nocite{AnglesPellarinTavares16}

Although log-algebraic identities were constructed in the theory of function fields, the purpose of the present note is to investigate how they occur in characteristic~$0$ on other algebraic groups, particularly on the formal groups of elliptic curves over~$\QQ$. Unlike for Drinfeld modules and Anderson $t$-modules over~$A$ or for the multiplicative group, where we exponentiate twisted harmonic power series, we find that log-algebraic formulas for an elliptic curve over~$\QQ$ arise most naturally through the curve's modular parametrization.

Our main results in these directions are in \S\ref{S:Formal} (see Corollary~\ref{C:logalg1} and Theorem~\ref{T:Main}). For example, we show for an elliptic curve $E/\QQ$ and $\beta = \sum_{k=0}^d m_k u^k \in \ZZ[u]$, that in $\power{\QQ[u]}{t}$,
\begin{equation}
\exp_{\cE} \Biggl( \sum_{n=1}^{\infty} \frac{a_n \beta(u^n)}{n}\,t^n \Biggr) = \sideset{}{_\cE}\sum_{k=0}^d [m_k]_{\cE}\bigl( \Phi(u^k t) \bigr).
\end{equation}
Here $\exp_{\cE}(t)$ denotes the exponential on the formal group of $E$, the sequence $\{a_n\}$ provides the Fourier coefficients of the newform $f \in S_2(\Gamma_0(N))$ attached to $E$, $\Phi(t) \in \power{\QQ}{t}$ is induced by the modular parametrization, and the sum ``$\sum_{\cE}$'' on the right is taken in the formal group~$\cE$. The reason to term this identity as ``log-algebraic'' is that the series $\Phi(t)$ formally represents the algebraic map $X_0(N) \to E$, and so the right-hand side represents a formal sum of algebraic points on~$E$. Thus the interior sum on the left is a formal logarithm of this sum of algebraic points. This identity is also closely related to the formal group of Honda~\cite{Honda68}, \cite{Honda70}, obtained from~$f$.

We present examples of applications to $L$-functions of elliptic curves in \S\ref{S:Examples}, by demonstrating how Corollary~\ref{C:logalg1} and Theorem~\ref{T:Main} can be used to determine exactly the values of $L(E,1)$ and $L(E,\chi,1)$ for a Dirichlet character $\chi$. For example, if $E_0 = X_0(11)$ and $\psi$ is a given cubic Dirichlet character modulo~$7$, we show in Example~\ref{Ex:three} that
\[
L(E_0/\QQ,\psi,1) = \frac{5}{14}(1+\sqrt{-3}) g(\psi) \Omega,
\]
where $g(\psi)$ is a Gauss sum and $\Omega$ is the positive real period of $E_0$.
To be sure, these special value formulas can be obtained using previous methods of modular symbols that are not far from our considerations, e.g., see~\cite{Cremona}*{\S 2.8--2.12} and~\cite{Shimura77}, but one underlying goal of this note is to investigate how special $L$-values are interpolated by power series identities. 

\section{Formal groups and log-algebraic identities} \label{S:Formal}

Suppose we have an elliptic curve,
\[
E : y^2 = x^3 - \frac{g_2}{4} x - \frac{g_3}{4}, \quad g_2,\ g_3 \in \CC,
\]
and let $\Lambda \subseteq \CC$ be its associated lattice so that $g_i = g_i(\Lambda)$ for $i=2$, $3$. Let $\wp(z) = \wp_{\Lambda}(z)$ be its associated Weierstrass $\wp$-function so that $(\wp(z),\tfrac12 \wp'(z))$ represents a point on $E(\CC)$. Recall by~\cite{SilvermanAEC}*{Thm.~VI.3.5} that the Laurent series expansion of $\wp(z)$ at $z=0$ is
\[
  \wp(z) = \frac{1}{z^2} + \sum_{k=1}^{\infty} (2k+1) G_{2k+2}(\Lambda) z^{2k},
\]
where $G_{2k}(\Lambda)$ is the weight $2k$ Eisenstein series for $\Lambda$.

Let $\cE$ denote the formal group of $E$ over $\QQ[g_2,g_3]$, and by abuse of notation we let $\cE(t_1,t_2) \in \power{\QQ[g_2,g_3]}{t_1,t_2}$ be the power series defining this formal group law as in \cite{SilvermanAEC}*{Ch.~IV}. In particular we have the formal $x$- and $y$-coordinates in $\laurent{\QQ[g_2,g_3]}{t}$,
\begin{equation} \label{E:xtyt}
x(t) = \frac{1}{t^2} + \frac{g_2}{4}t^2 + \frac{g_3}{4}t^4 - \frac{g_2^2}{16}t^6 + \cdots, \quad
y(t) = -\frac{1}{t^3} - \frac{g_2}{4}t - \frac{g_3}{4}t^3 + \frac{g_2^2}{16}t^5 + \cdots,
\end{equation}
and formal invariant differential,
\[
\omega_E(t) = \biggl( 1 - \frac{g_2}{2} t^4 - \frac{3g_3}{4} t^6 + \frac{3g_2^2}{8}t^8 + \cdots \biggr)\,dt.
\]
We collect a couple results about formal groups. This first follows from~\cite{SilvermanAEC}*{\S VII.2}.

\begin{lemma} \label{L:formaliso}
The maps
\[
\alpha(t) \mapsto \bigl( x(\alpha(t)), y(\alpha(t)) \bigr), \quad
-\frac{x_0(t)}{y_0(t)} \mapsfrom  (x_0(t), y_0(t)),
\]
induce mutually inverse isomorphisms of abelian groups,
\[
\cE\bigl( t\power{\QQ(g_2,g_3)}{t} \bigr) \cong \bigl\{ (x_0(t),y_0(t)) \in E\bigl( \laurent{\QQ(g_2,g_3)}{t} \bigr) : \ord_t \bigl( x_0(t)/y_0(t) \bigr) \geqslant 1 \bigr\} \cup \{ O\}.
\]
\end{lemma}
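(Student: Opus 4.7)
The plan is to invoke the standard formal group construction from \cite{SilvermanAEC}*{\S VII.1--VII.2}. First I would pass to the coordinates $z = -x/y$ and $w = -1/y$, so that $x = z/w$ and $y = -1/w$, and the Weierstrass equation $y^2 = x^3 - (g_2/4)x - (g_3/4)$ rearranges to
\[
w = z^3 - \frac{g_2}{4}\, z w^2 - \frac{g_3}{4}\, w^3.
\]
A Hensel-type iteration solves this uniquely for a power series $w(z) \in z^3 \power{\QQ[g_2,g_3]}{z}$, and one then \emph{defines} $x(z) \assign z/w(z)$ and $y(z) \assign -1/w(z)$, matching the expansions in~\eqref{E:xtyt}. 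Two features drop out immediately: the identity $-x(z)/y(z) = z$ holds in $\laurent{\QQ[g_2,g_3]}{z}$, and $\cE(t_1,t_2)$ is by construction the unique series for which $(x(\cE(t_1,t_2)), y(\cE(t_1,t_2)))$ is the chord-and-tangent sum of $(x(t_1),y(t_1))$ and $(x(t_2),y(t_2))$ on $E$.

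For the forward map, given $\alpha(t) \in t\power{\QQ(g_2,g_3)}{t}$, the substitutions $x(\alpha(t))$ and $y(\alpha(t))$ make sense because $\alpha$ has positive $t$-valuation; the resulting pair lies on $E$ by the Weierstrass identity, and $-x(\alpha(t))/y(\alpha(t)) = \alpha(t)$ has $t$-order $\geqslant 1$, landing in the prescribed subset. The composition reverse-after-forward then returns $\alpha$ directly from the identity $-x(z)/y(z) = z$.

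The real content is the other composition. For $(x_0,y_0) \in E(\laurent{\QQ(g_2,g_3)}{t})$ with $\ord_t(x_0/y_0) \geqslant 1$, I would set $t_0 \assign -x_0/y_0$ and $w_0 \assign -1/y_0$; the Weierstrass equation rewritten in these coordinates reads $w_0 = t_0^3 - (g_2/4)\, t_0 w_0^2 - (g_3/4)\, w_0^3$. The key rigidity step is to prove $\ord_t(w_0) \geqslant 3$: letting $m \assign \ord_t(w_0)$, if $m \leqslant -1$ then $3m$ is strictly smaller than both $3\ord_t(t_0) \geqslant 3$ and $\ord_t(t_0) + 2m \geqslant 1 + 2m$, so the right side has order exactly $3m$, forcing $m = 3m$, a contradiction; if $m = 0$ then specializing at $t=0$ gives $w_0(0)^2 = -4/g_3$, impossible since $-4/g_3$ is not a square in $\QQ(g_2,g_3)$; and if $m \geqslant 1$ then all three right-hand orders are already $\geqslant 3$, so in fact $m \geqslant 3$.

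With $\ord_t(w_0) \geqslant 3$ in hand, subtracting the defining relation for $w(t_0)$ from that for $w_0$ and factoring produces
\[
\bigl( w_0 - w(t_0) \bigr) \cdot \Bigl[\, 1 + \tfrac{g_2}{4}\, t_0 \bigl( w_0 + w(t_0) \bigr) + \tfrac{g_3}{4}\bigl( w_0^2 + w_0 w(t_0) + w(t_0)^2 \bigr)\, \Bigr] = 0.
\]
The bracketed factor is of the form $1 + O(t^4)$, hence a unit in $\power{\QQ(g_2,g_3)}{t}$, so $w_0 = w(t_0)$ and therefore $(x_0,y_0) = (x(t_0),y(t_0))$. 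The homomorphism property of both maps is then automatic from the defining property of $\cE(t_1,t_2)$ recalled in the first paragraph. I expect the rigidity step ruling out $\ord_t(w_0) \leqslant 0$ to be the main obstacle; once that is secured, the uniqueness argument is essentially mechanical.
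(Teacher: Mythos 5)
Your proof is correct, and in substance it is the argument the paper delegates entirely to its citation: the paper's proof of this lemma is the single remark that it ``follows from \cite{SilvermanAEC}*{\S VII.2},'' i.e., the standard $(z,w)=(-x/y,\,-1/y)$ construction of the formal group together with the identification of $\cE$ with the kernel of reduction over the local ring $\power{\QQ(g_2,g_3)}{t}$. The one place where you supply content genuinely beyond that reference is the rigidity step in the case $m=\ord_t(w_0)=0$: Silverman's Proposition VII.2.2 describes the image as the kernel of reduction (points with $\ord_t(x_0)\leqslant -2$), whereas the lemma's condition is $\ord_t(x_0/y_0)\geqslant 1$, and your $m=0$ analysis is exactly what shows the two descriptions coincide here. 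Be aware that this step relies on $-4/g_3$ not being a square in $\QQ(g_2,g_3)$, which is valid when $g_2,g_3$ are treated as indeterminates (the natural reading of the paper's $\QQ[g_2,g_3]$) but can fail for special numerical values of $g_3$; since in the paper's later uses of the lemma the relevant points always satisfy $\ord_t(x_0)=-2$, this caveat costs nothing, but it is worth stating the genericity hypothesis explicitly if you keep the $\ord_t(x_0/y_0)\geqslant 1$ formulation.
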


This next follows from the fact that the formal exponential and logarithms for a formal group over a field of characteristic~$0$ are isomorphisms with the formal additive group~$\Gahat$ (see \cite{Hazewinkel}*{\S 5.4} or \cite{SilvermanAEC}*{\S IV.5})

\begin{lemma} \label{L:morphism}
Let $K$ be a field of characteristic~$0$, and let $\cF(t_1,t_2)$, $\cG(t_1,t_2) \in \power{K}{t_1,t_2}$ be formal groups over $K$. For a power series $m(t) \in \power{K}{t}$, the following are equivalent.
\begin{itemize}
\item[(a)] $m : \cF \to \cG$ is a morphism of formal groups,
\item[(b)] $m \circ \exp_{\cF}(t) = \exp_{\cG}(m'(0)t)$,
\item[(c)] $\log_{\cG} \circ\, m(t) = m'(0)\cdot \log_{\cF}(t)$,
\item[(d)] $\omega_{\cG} \circ m(t) = m'(0) \cdot \omega_{\cF}(t)$.
\end{itemize}
\end{lemma}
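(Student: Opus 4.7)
The plan is to exploit the fact, valid over a characteristic-zero field, that $\log_\cF : \cF \to \Gahat$ and $\log_\cG : \cG \to \Gahat$ are formal-group isomorphisms with inverses $\exp_\cF$ and $\exp_\cG$, and then reduce each equivalence to the rigidity statement that every formal power-series endomorphism of $\Gahat$ over $K$ is multiplication by a scalar.

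For (a) $\Leftrightarrow$ (c), I would set $n(t) \assign \log_\cG(m(\exp_\cF(t)))$. Pre- and post-composition with formal isomorphisms preserves the property of being a morphism of formal groups, so $m : \cF \to \cG$ is a morphism if and only if $n : \Gahat \to \Gahat$ is. But a power series $n(t) \in \power{K}{t}$ satisfying $n(t_1+t_2) = n(t_1) + n(t_2)$ in $\power{K}{t_1,t_2}$ must have $n(t) = ct$ for some $c \in K$, as a routine coefficient comparison shows. Computing $n'(0)$ by the chain rule and using $\exp_\cF'(0) = \log_\cG'(0) = 1$ gives $c = m'(0)$. Rearranging $\log_\cG(m(\exp_\cF(t))) = m'(0)\,t$ by the substitution $t = \log_\cF(s)$ yields (c), while conversely (c) presents $m = \exp_\cG \circ (m'(0)\cdot) \circ \log_\cF$ as a composition of morphisms, giving (a).

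For (b) $\Leftrightarrow$ (c), I would apply $\exp_\cG$ to both sides of (c) and substitute $t \mapsto \exp_\cF(t)$, using $\exp_\cF \circ \log_\cF = t$ as formal power series; the reverse applies $\log_\cG$ to (b) and substitutes $t \mapsto \log_\cF(t)$. For (c) $\Leftrightarrow$ (d), I would differentiate (c) in $t$ by the chain rule and invoke $\omega_\cF = \log_\cF'(t)\,dt$ and $\omega_\cG = \log_\cG'(t)\,dt$, producing
\[
\log_\cG'(m(t))\cdot m'(t) = m'(0)\cdot \log_\cF'(t),
\]
which is exactly (d) once ``$\omega_\cG \circ m$'' is read as the pullback $m^*\omega_\cG = \log_\cG'(m(t))\,m'(t)\,dt$, as in \cite{SilvermanAEC}*{\S IV.4}. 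Integrating the above back up, noting that both sides of (c) vanish at $t=0$, recovers (c) from (d).

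No step presents a serious technical obstacle; the argument is largely notational bookkeeping around the fact that $\log_\cF$ and $\log_\cG$ are isomorphisms to $\Gahat$. The one place where the characteristic-zero hypothesis genuinely enters is the rigidity of $\Gahat$, which is what pins the scalar $c$ down to $m'(0)$ and collapses the formal-group compatibility condition to a single power-series identity.
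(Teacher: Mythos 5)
Your proof is correct and takes the same route the paper itself indicates: the paper gives no argument beyond remarking that the lemma ``follows from the fact that the formal exponential and logarithms\ldots are isomorphisms with the formal additive group $\Gahat$'' (citing Hazewinkel \S 5.4 and Silverman \S IV.5), and your write-up is precisely that reduction carried out in detail, with the rigidity of $\Gahat$ (additive power series over a characteristic-$0$ field are linear) pinning the scalar to $m'(0)$. Your reading of $\omega_{\cG}\circ m$ as the pullback $\log_{\cG}'(m(t))\,m'(t)\,dt$ also matches the paper's own usage in the proof of Proposition~\ref{P:Phiiso}.
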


We then obtain the following result for the formal exponential of~$\cE$.

\begin{proposition} \label{P:ExpE}
Considering $\wp(z) \in \laurent{\QQ[g_2,g_3]}{z}$ as a formal Laurent series in~$z$,
\[
  \exp_{\cE}(z) = -\frac{2\wp(z)}{\wp'(z)} \in \power{\QQ[g_2,g_3]}{z}.
\]
\end{proposition}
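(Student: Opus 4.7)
The plan is to take $f(z) := -2\wp(z)/\wp'(z)$, verify it lies in $\power{\QQ[g_2,g_3]}{z}$ with $f'(0)=1$, and then identify $f=\exp_\cE$ by matching invariant differentials via Lemma~\ref{L:morphism}.

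First I would confirm the formal nature of $f$. Using the Laurent expansion displayed above, write $\wp(z) = z^{-2}(1+u(z))$ with $u(z) \in z^4\power{\QQ[g_2,g_3]}{z}$; each coefficient $G_{2k+2}(\Lambda)$ is a $\QQ$-polynomial in $g_2, g_3$ via the recursion implicit in $\wp'' = 6\wp^2 - \tfrac12 g_2$. Term-by-term differentiation gives $\wp'(z) = -2z^{-3}(1+v(z))$ with $v(z) \in z^4\power{\QQ[g_2,g_3]}{z}$. Dividing,
\[
f(z) \;=\; z\cdot\frac{1+u(z)}{1+v(z)} \;\in\; z + z^5\power{\QQ[g_2,g_3]}{z},
\]
so $f(z) \in \power{\QQ[g_2,g_3]}{z}$ with $f'(0)=1$.

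Next I would observe that $f(z)$ is the pullback of the formal parameter $t=-x/y$ along the Weierstrass uniformization $z\mapsto(\wp(z), \tfrac12\wp'(z))$ of $E$. The invariant differential $\omega_E = dx/(2y)$ on $E$ pulls back along this parametrization to $\wp'(z)\,dz/\wp'(z) = dz$; rewritten formally in terms of the series $\omega_E(t)\,dt$ from the excerpt, this is the identity
\[
\omega_E(f(z))\cdot f'(z) \;=\; 1
\]
in $\power{\QQ(g_2,g_3)}{z}$. Working over $K=\QQ(g_2,g_3)$ with $\cF = \Gahat$ (so that $\omega_{\Gahat} = dz$), $\cG = \cE$, and $m=f$, condition~(d) of Lemma~\ref{L:morphism} is exactly this identity, so $f\colon\Gahat\to\cE$ is a morphism of formal groups with $m'(0) = f'(0) = 1$. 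Condition~(b) then reads $f(\exp_{\Gahat}(z)) = \exp_\cE(z)$, and since $\exp_{\Gahat}(z) = z$, this gives $f = \exp_\cE$, as claimed.

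The only delicate point is justifying $\omega_E(f(z))\cdot f'(z) = 1$ as a purely formal identity rather than an analytic one. One can bypass any appeal to complex analysis by substituting $x=\wp(z)$, $y=\tfrac12\wp'(z)$ directly into the formal Weierstrass equation and its derivative: since $y^2$ and $x^3 - \tfrac14 g_2 x - \tfrac14 g_3$ agree as Laurent series in $z$, differentiating yields $dx = \wp'(z)\,dz$, whence $dx/(2y) = dz$, which under the change of variable $t=-x/y=f(z)$ becomes the required equality $\omega_E(f(z))\cdot f'(z)=1$ over $\QQ(g_2,g_3)$.
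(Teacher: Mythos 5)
Your proof is correct and follows essentially the same route as the paper's: both identify $-2\wp(z)/\wp'(z)$ with the formal parameter $t=-x/y$ evaluated at the point $(\wp(z),\tfrac12\wp'(z))$ and use the fact that the invariant differential pulls back to $dz$ under the Weierstrass parametrization. The paper integrates this directly to get $\log_{\cE}(t)=z$ and then inverts, whereas you package the identical computation through Lemma~\ref{L:morphism} with $\cF=\Gahat$; your additional checks (that $-2\wp(z)/\wp'(z)\in z+z^5\power{\QQ[g_2,g_3]}{z}$ and that the differential identity can be justified formally rather than analytically) add rigor but do not constitute a different argument.
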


\begin{proof}
The invariant differential $\omega_E$ on $E$ satisfies $\omega_E = dx/2y = d\wp(z)/\wp'(z) = dz$.  On the other hand, letting $t=-x/y$ be the formal parameter on $\cE$, the invariant differential formally satisfies
$\omega_{\cE}(t) = dx(t)/2y(t)$. By definition the formal logarithm $\log_{\cE}(t)$ satisfies
\[
\log_{\cE}(t) = \int \omega_{\cE}(t) = \int dz = z,
\]
and so
\[
\exp_{\cE}(z) = t = -\frac{x(z)}{y(z)} = -\frac{2\wp(z)}{\wp'(z)}.
\qedhere
\]
\end{proof}

Now let $E_0/\QQ$ be an elliptic curve of conductor~$N$,
\[
E_0 : y^2 + e_1 xy + e_3 y = x^3 + e_2 x^2 + e_4 x + e_6, \quad e_i \in \ZZ,
\]
which is modular by~\cite{BCDT01}, \cite{TW95}, \cite{Wiles95}. Assume further that the modular parametrization $\mu : X_0(N) \to E_0$ is optimal, i.e., $E_0$ is a strong Weil curve. Let $f \in S_2(\Gamma_0(N))$ be the unique normalized newform associated to $E_0$ with Fourier expansion
\[
f(\tau) = \sum_{n=1}^{\infty} a_n q^n, \quad q = e^{2\pi i \tau},\ a_n \in \ZZ,\ a_1 = 1.
\]
Then the pullback of the invariant differential $\omega_{E_0}$ satisfies
\[
  \mu^{*} \omega_{E_0} = c \cdot 2\pi i f(\tau)\,d\tau =
  c\cdot \sum_{n=1}^{\infty} a_n q^{n-1}\,dq,
\]
where $c$ is the Manin constant, which henceforth we assume to be~$1$. Replacing $q$ by a formal parameter $t$, we set
\begin{equation} \label{E:lambda}
\lambda(t) \assign \int \mu^{*} \omega_{E_0} = \sum_{n=1}^{\infty} \frac{a_n}{n} t^n,
\end{equation}
and recall a theorem of Honda.

\begin{theorem}[{Honda~\cite{Honda68}*{Thm.~5}, \cite{Honda70}}] \label{T:Honda}
There is a formal group $\cL$ over $\ZZ$ so that
\[
  \log_{\cL}(t) = \lambda(t),
\]
and in particular, $\cL(t_1, t_2) = \lambda^{-1} ( \lambda(t_1)+\lambda(t_2)) \in \power{\ZZ}{t_1,t_2}$. Furthermore, $\cL$ is strongly isomorphic over $\ZZ$ to the formal group of $E_0$.
\end{theorem}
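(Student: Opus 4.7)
My plan is to prove both assertions via Honda's functional equation criterion, which is a classical tool relating formal group laws over $\ZZ$ to $p$-adic congruences satisfied by their logarithms. For the first assertion, the main input is that the Fourier coefficients $\{a_n\}$ of the Hecke eigenform $f$ are constrained by the Hecke relations coming from the Euler product of $L(f,s)$: multiplicativity $a_{mn} = a_m a_n$ for $\gcd(m,n) = 1$, together with $a_{p^{k+1}} = a_p a_{p^k} - p\,a_{p^{k-1}}$ at good primes $p \nmid N$ and $a_{p^k} = a_p^k$ at bad primes $p \mid N$. One checks that these recursions translate, prime by prime, into the $p$-adic functional equation that Honda's lemma requires of $\lambda(t)$: for each $p$, $\lambda(t)$ lies in the Honda type associated to the local Euler polynomial $1 - a_p X + \epsilon_p X^2$, where $\epsilon_p \assign p$ for $p \nmid N$ and $\epsilon_p \assign 0$ for $p \mid N$. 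Honda's functional equation lemma then yields that $\cL(t_1, t_2) = \lambda^{-1}(\lambda(t_1) + \lambda(t_2))$ has coefficients in $\ZZ_{(p)}$ for each $p$, and hence in $\ZZ$.

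The verification of the $p$-adic functional equation is a direct $q$-expansion calculation: writing an index in the form $m = p^k n$ with $p \nmid n$, Hecke multiplicativity $a_{p^k n} = a_{p^k} a_n$ combined with the local recursion on $\{a_{p^k}\}$ collapses the coefficient at $t^m$ into $\ZZ_p$. The low-index cases $k = 0$ and $k = 1$ are handled by the trivial observation that $p a_n/n \in p\,\ZZ_p$ when $p \nmid n$, and by the vanishing of $a_{pn} - a_p a_n$, respectively.

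For the strong isomorphism assertion, I would set $\Phi(t) \assign \exp_{\cE}(\lambda(t)) \in t\,\power{\QQ}{t}$. Since $a_1 = 1$, we have $\Phi'(0) = 1$, and Lemma~\ref{L:morphism}(c) gives $\log_{\cE}(\Phi(t)) = \lambda(t) = \log_{\cL}(t)$, which exhibits $\Phi$ as a strong isomorphism $\cL \to \cE$ over~$\QQ$. There are two natural paths to upgrade $\Phi$ to an isomorphism over~$\ZZ$. The first is geometric: $\Phi(q)$ equals the $q$-expansion at the cusp $\infty$ of the pullback $\mu^{*}(-x/y)$ of the formal parameter at the origin of $E_0$, and the assumption that $\mu$ is optimal with Manin constant~$1$ forces this $q$-expansion to have integer coefficients. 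The second is Honda-theoretic: a companion result of Honda asserts that two formal groups over $\ZZ$ whose logarithms share the same Honda type at every prime are strongly isomorphic over $\ZZ$, reducing matters to verifying that $\log_{\cE}$ satisfies the same local functional equation as $\lambda$.

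I expect the main obstacle to be precisely this last verification for $\cE$. In the Honda-theoretic approach, at a prime $p \nmid N$ the required functional equation for $\log_{\cE}$ follows from the Eichler--Shimura congruence $T_p \equiv \mathrm{Frob}_p + \mathrm{Ver}_p \pmod{p}$ on the reduction of $\mathrm{Jac}(X_0(N))$, together with the realization of $\cE$ as the formal group of an optimal quotient of this Jacobian; at primes $p \mid N$ one needs a case analysis depending on whether $E_0$ has multiplicative or additive reduction at~$p$. The geometric route rephrases but does not circumvent this content, since $\mu^{*}(-x/y) \in q\,\power{\ZZ}{q}$ is itself a nontrivial integrality statement that reflects the same arithmetic information as the functional equation for $\log_{\cE}$.
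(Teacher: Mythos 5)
The paper offers no proof of Theorem~\ref{T:Honda} to compare against: it is quoted as a black-box result of Honda, with \cite{Honda68}*{Thm.~5} and \cite{Honda70} cited as the source, and the authors use it downstream (in Proposition~\ref{P:Phiiso} and Corollary~\ref{C:logalg1}) without reproving it. Measured against Honda's actual argument, your outline has the right architecture: integrality of $\cL$ follows from the functional equation lemma applied prime by prime, with the Hecke relations $a_{mn}=a_ma_n$, $a_{p^{k+1}}=a_pa_{p^k}-pa_{p^{k-1}}$ for $p\nmid N$, and $a_{p^k}=a_p^k$ for $p\mid N$ supplying exactly the required $p$-adic congruences; and the strong isomorphism is obtained from the companion principle that formal groups over $\ZZ_p$ whose logarithms have the same Honda type are strongly isomorphic, reduced at good primes to the Eichler--Shimura relation.

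As a self-contained proof, however, the proposal stops short precisely where you say it does, and those stopping points are the substance of the theorem. The verification that the logarithm of the formal group of the \emph{minimal} model $E_0$ satisfies the same local functional equation as $\lambda$ is nontrivial at every prime and genuinely delicate at $p=2$ and at primes of bad reduction (at an additive prime, for instance, the claim forces $\log_{\cE_0}(t)\in\power{\ZZ_p}{t}$, i.e., that the formal group of the minimal model is strongly isomorphic to $\Gahat$ over $\ZZ_p$); this is the part that required Honda's second paper \cite{Honda70}. The ``same type implies strongly isomorphic'' lemma is likewise invoked rather than established. Your geometric alternative --- integrality of the $q$-expansion of the pullback of the formal parameter of $E_0$ under an optimal $\mu$ with Manin constant $1$ --- is, as you yourself observe, equivalent to rather than independent of that local verification. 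So: correct strategy, honestly flagged obstacles, but the decisive local computations are asserted, not carried out; since the paper simply cites Honda, that is also the appropriate level of detail for the paper itself.
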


We also recall that~\eqref{E:lambda} is closely related to the Eichler integral
\[
  2\pi i \int_{z}^{i\infty} f(\tau)\,d\tau = -\sum_{n=1}^{\infty} \frac{a_n}{n} e^{2\pi i nz},
\]
for $z \in \CC$ with $\im(z) > 0$, and as such $\lambda(t)$ is a formal Eichler integral in the sense of~\cite{BGKO13}.

Defining invariants $b_i$, $c_i \in \QQ$ as in \cite{SilvermanAEC}*{\S III.1}, we change coordinates on $E_0$ by
\begin{equation} \label{E:coordchange}
x \leftarrow x - \frac{b_2}{12}, \quad y \leftarrow y - \frac{e_1}{2} x + \frac{e_1 b_2}{24} - \frac{e_3}{2},
\end{equation}
and obtain the $\QQ$-isomorphism $\psi : E_0 \to E$, where
\[
E : y^2 = x^3 - \frac{c_4}{48} x - \frac{c_6}{864}.
\]
Setting $g_2 \assign c_4/12$ and $g_3 \assign c_6/216$, we obtain $\Lambda$, $\wp(z)=\wp_\Lambda(z)$, $\log_{\cE}(z)$, and $\exp_{\cE}(z)$, all associated to $E$, as in the beginning of this section. Thus we can define $\phi \assign \psi \circ \mu$,
\[
\phi : X_0(N) \xrightarrow{\mu} E_0 \xrightarrow{\psi} E,
\]
from which we see that
\[
\phi^* \omega_E =\mu^* \omega_{E_0} = \sum_{n=1}^{\infty} a_n q^{n-1}\,dq = d\lambda(q).
\]
Moreover, we can set $X(q) \assign \phi^*(x)$ and $Y(q) \assign \phi^*(y)$ in $\QQ(X_0(N))$. Then as $X(q)$, $Y(q)$ satisfy the defining equation for~$E$ and
\begin{equation} \label{E:XYandf}
q \cdot dX/dq = 2Y\cdot f,
\end{equation}
it follows recursively (see \cite{Cremona}, \cite{GriffinHales20}*{\S 3}) that $X$, $Y$ have Laurent series expansions in~$q$,
\begin{equation} \label{E:XYq}
X(q) = \frac{1}{q^2} -\frac{a_2}{q} + \frac{3a_2^2}{4} - \frac{2a_3}{3} + \cdots, \quad Y(q) = -\frac{1}{q^3} + \frac{3a_2}{2q^2} - \frac{3a_2^2-2a_3}{2q} + \cdots.
\end{equation}
We let $X(t)$, $Y(t) \in \laurent{\QQ}{t}$ be the formal series obtained by replacing $q$ with $t$, and we set
\begin{equation} \label{E:Phidef}
\Phi(t) \assign -\frac{X(t)}{Y(t)} = t + \frac{a_2}{2}t^2 + \frac{a_3}{3}t^3 + \frac{a_4}{4} t^4 + \biggl( \frac{c_4}{120} + \frac{a_5}{5} \biggr) t^5 + \cdots \in \power{\QQ}{t}.
\end{equation}
The following proposition underlies our log-algebraic identities.

\begin{proposition} \label{P:Phiiso}
The power series $\Phi(t) \in \power{\QQ}{t}$ is an isomorphism $\Phi: \cL \to \cE$ of formal groups over $\QQ$.
\end{proposition}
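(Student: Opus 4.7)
The plan is to apply Lemma~\ref{L:morphism} with $\cF=\cL$, $\cG=\cE$, $m=\Phi$, and check condition~(d) there. From \eqref{E:Phidef} one reads off $\Phi'(0)=1$, which is a unit in $\QQ$, so once $\Phi$ is shown to be a morphism of formal groups the inverse power series automatically exists in $\power{\QQ}{t}$ and $\Phi$ is an isomorphism. Thus the whole content of the proposition is the single identity
\[
\omega_{\cE}(\Phi(t))=\omega_{\cL}(t)\quad\text{in }\power{\QQ}{t}\,dt.
\]

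The key preliminary is the book-keeping identification $X(t)=x(\Phi(t))$ and $Y(t)=y(\Phi(t))$ in $\laurent{\QQ}{t}$. I would obtain it from Lemma~\ref{L:formaliso}, noting that $g_2,g_3\in\QQ$ allows one to specialize that lemma from $\QQ(g_2,g_3)$ to $\QQ$. Its hypotheses are easy to check: $(X(t),Y(t))$ satisfies the Weierstrass equation of $E$ because it is the $\phi$-pullback of the coordinate functions on $E$, and $\ord_t(X(t)/Y(t))=1$ by \eqref{E:Phidef}. Since the two maps in Lemma~\ref{L:formaliso} are mutually inverse and $(X(t),Y(t))\mapsto -X(t)/Y(t)=\Phi(t)$ by the very definition of $\Phi$, the inverse map must send $\Phi(t)$ back to $(X(t),Y(t))$, which is exactly the identification we want.

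With that in hand, the differential calculation is short: $\omega_{\cE}(\Phi(t))=dx(\Phi(t))/2y(\Phi(t))=dX(t)/2Y(t)$, and then \eqref{E:XYandf}---read formally, with $q$ replaced by the formal parameter $t$ and $f(t)=\sum_{n\geqslant 1}a_n t^n$---rewrites this as $t^{-1}f(t)\,dt=\sum_{n\geqslant 1}a_n t^{n-1}\,dt=d\lambda(t)$. On the $\cL$-side, Theorem~\ref{T:Honda} gives $\log_{\cL}(t)=\lambda(t)$, so $\omega_{\cL}(t)=d\lambda(t)$ as well. The two sides coincide, condition~(d) of Lemma~\ref{L:morphism} is verified, and the proposition follows.

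If there is any real obstacle it is the identification of the algebraic coordinates $X(t),Y(t)$ with the formal substitutions $x(\Phi(t)),y(\Phi(t))$; once that is pinned down via Lemma~\ref{L:formaliso}, the rest reduces to the observation that $\phi^{*}\omega_E$ is by construction $d\lambda(q)$, which is precisely what makes the $\cL$- and $\cE$-sides match.
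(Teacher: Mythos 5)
Your proposal is correct and follows essentially the same route as the paper: both establish $X(t)=x(\Phi(t))$, $Y(t)=y(\Phi(t))$ via Lemma~\ref{L:formaliso}, compute $\omega_E\circ\Phi = dX(t)/2Y(t) = \sum a_n t^{n-1}\,dt$ using \eqref{E:XYandf}, and invoke Theorem~\ref{T:Honda} together with condition~(d) of Lemma~\ref{L:morphism} and $\Phi'(0)=1$. Your writeup is slightly more explicit about verifying the hypotheses of Lemma~\ref{L:formaliso} and about why $\Phi'(0)=1$ upgrades the morphism to an isomorphism, but the substance is identical.
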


\begin{proof}
Combining Lemma~\ref{L:formaliso} and \eqref{E:Phidef}, we see that
\begin{equation} \label{E:XYPhi}
X(t) = x(\Phi(t)), \quad Y(t) = y(\Phi(t)).
\end{equation}
It then follows that
\[
\omega_E \circ \Phi = \frac{d\bigl(x \circ \Phi(t)\bigr)}{2y \circ \Phi(t)} = \frac{dX(t)}{2Y(t)} = \sum_{n=1}^{\infty} a_n t^{n-1}\,dt,
\]
where the last equality follows from~\eqref{E:XYandf}. Then by Theorem~\ref{T:Honda} we conclude that $\omega_E \circ \Phi = \Phi'(0)\omega_{\cL}$ (since $\Phi'(0)=1$), and the result follows from Lemma~\ref{L:morphism}.
\end{proof}

We then obtain the following log-algebraic identity on power series in $\power{\QQ}{t}$ together with a specialization relating it to a special $L$-value. This expression is ``log-algebraic'' in that $\Phi(t)$ formally represents the algebraic map $\phi: X_0(N) \to E$.

\begin{corollary} \label{C:logalg1}
For $E_0/\QQ$ a strong Weil curve of conductor $N$, let $E/\QQ$, $\exp_{\cE}(t)$, and $\Phi(t)$ be chosen as above.
\begin{itemize}
\item[(a)] We have the identity of formal power series in $\power{\QQ}{t}$,
\[
\exp_{\cE} \Biggl( \sum_{n=1}^{\infty} \frac{a_n}{n} t^n \Biggr) = \Phi(t).
\]
\item[(b)] Suppose that the sign of the functional equation of $L(E_0/\QQ,s) = L(f,s)$ is $\varepsilon = {+1}$. Then
\[
\wp\Bigl( \tfrac12 L(E_0/\QQ,1) \Bigr) = X\bigl( e^{-2\pi/\sqrt{N}} \bigr),
\]
when both sides converge.
\end{itemize}
\end{corollary}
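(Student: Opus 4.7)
For part (a), the plan is immediate from the tools at hand. By Proposition~\ref{P:Phiiso}, $\Phi$ is an isomorphism $\cL \to \cE$ of formal groups over $\QQ$ with $\Phi'(0) = 1$, and by Theorem~\ref{T:Honda}, $\log_{\cL}(t) = \lambda(t) = \sum_{n=1}^\infty (a_n/n)t^n$. Applying Lemma~\ref{L:morphism}(c) to $\Phi$ gives $\log_{\cE} \circ \Phi = \Phi'(0)\,\log_{\cL} = \lambda$; composing on the left with $\exp_{\cE}$ yields $\Phi(t) = \exp_{\cE}(\lambda(t))$, which is the stated identity.

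For part (b), the plan is to pair (a) with the classical Mellin-transform representation of $L(f,1)$. Writing $L(E_0/\QQ,s) = L(f,s)$, I would begin from
\[
N^{s/2}(2\pi)^{-s}\Gamma(s)\,L(f,s) = N^{s/2}\int_0^\infty f(it)\,t^{s-1}\,dt,
\]
split the integral at $t = 1/\sqrt{N}$, and apply the Atkin--Lehner relation $f|_2 W_N = -\varepsilon f$ (folding $\int_0^{1/\sqrt{N}}$ onto the tail by the substitution $t \mapsto 1/(Nu)$). Specializing at $s=1$ with $\varepsilon = +1$, the two pieces combine to give $L(f,1) = 4\pi \int_{1/\sqrt{N}}^\infty f(it)\,dt$. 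Inserting $f(it) = \sum_{n\ge 1} a_n e^{-2\pi n t}$ and integrating term by term (justified by absolute convergence on $t \geqslant 1/\sqrt{N}$) produces
\[
\tfrac{1}{2}L(f,1) = \sum_{n=1}^\infty \frac{a_n}{n}\,e^{-2\pi n/\sqrt{N}} = \lambda\bigl(e^{-2\pi/\sqrt{N}}\bigr).
\]

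To conclude, set $q_0 \assign e^{-2\pi/\sqrt{N}}$. Part~(a) rewrites the preceding display as $\exp_{\cE}\bigl(\tfrac{1}{2}L(f,1)\bigr) = \Phi(q_0)$. Proposition~\ref{P:ExpE} supplies $\wp(z) = x(\exp_{\cE}(z))$ (first as an identity of formal power series, hence analytically on any common domain of convergence), and \eqref{E:XYPhi} gives $X(t) = x(\Phi(t))$, so
\[
\wp\bigl(\tfrac{1}{2}L(E_0/\QQ,1)\bigr) = x\bigl(\exp_{\cE}(\tfrac{1}{2}L(f,1))\bigr) = x\bigl(\Phi(q_0)\bigr) = X(q_0),
\]
as desired. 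The main technical step is the classical Mellin/functional-equation computation of $L(f,1)$ as a tail integral of $f$; the residual convergence subtleties (radius of $\lambda$ at $q_0$, avoidance of poles of $\wp$, and the passage from formal to analytic identity) are absorbed by the hypothesis that both sides converge.
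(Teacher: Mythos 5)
Part~(a) of your argument is exactly the paper's: combine Theorem~\ref{T:Honda} and Proposition~\ref{P:Phiiso} via Lemma~\ref{L:morphism}, using $\Phi'(0)=1$. For the series representation $\tfrac12 L(f,1)=\sum_{n\geq 1}(a_n/n)e^{-2\pi n/\sqrt{N}}$ you rederive from the Mellin transform and the Atkin--Lehner involution what the paper simply cites as \cite{Cremona}*{Prop.~2.11.1} (equation~\eqref{E:Lrapid}); that is a correct and standard derivation, just more self-contained than the paper's citation.

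There is, however, one step in your part~(b) that the paper deliberately avoids, and for good reason. Your chain $\wp\bigl(\tfrac12 L\bigr)=x\bigl(\exp_{\cE}(\tfrac12 L)\bigr)=x\bigl(\Phi(q_0)\bigr)=X(q_0)$ requires the \emph{intermediate} quantities $\exp_{\cE}(\tfrac12 L)$ and $\Phi(q_0)$ to converge as evaluations of their power series, and this is not guaranteed by the hypothesis that the two sides of the final identity converge: since $\Phi(t)=-X(t)/Y(t)$, the radius of convergence of $\Phi$ is limited by zeros of $Y(t)$, which may lie inside the disk of radius $q_0=e^{-2\pi/\sqrt{N}}$ even when $X(q_0)$ itself converges. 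The paper explicitly flags this (``it runs into issues near zeros of $Y(t)$'') and instead first composes everything at the level of formal Laurent series, using Lemma~\ref{L:formaliso} and~\eqref{E:XYPhi} to obtain the single identity $\wp\bigl(\sum_{n\geq 1}(a_n/n)t^n\bigr)=X(t)$ in $\laurent{\QQ}{t}$, and only then substitutes $t=q_0$. Your closing remark that the convergence subtleties ``are absorbed by the hypothesis'' papers over precisely this point: the hypothesis covers the two ends of your chain but not its middle. The fix is exactly the paper's reordering --- establish $\wp(\lambda(t))=X(t)$ formally before specializing --- so the gap is easily repaired, but as written your argument would fail for curves where $Y$ vanishes inside the relevant disk.
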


\begin{proof}
For (a) we combine Theorem~\ref{T:Honda} and Proposition~\ref{P:Phiiso} (again using $\Phi'(0)=1$). For~(b), in general if $\varepsilon = \pm 1$ is the sign of the functional equation, then by~\cite{Cremona}*{Prop.~2.11.1} we have the rapidly converging formula
\begin{equation} \label{E:Lrapid}
L(E_0/\QQ,1) = (1 + \varepsilon) \sum_{n=1}^{\infty} \frac{a_n}{n} e^{-2\pi n/\sqrt{N}}.
\end{equation}
When $\varepsilon={+1}$, we could obtain $\exp_{\cE} ( \tfrac12 L(E_0/\QQ,1)) = \Phi( \exp(-2\pi/\sqrt{N}))$ by specializing into part~(a), and this can converge for some elliptic curves $E_0$ and $E$ (see Example~\ref{Ex:one}), but it runs into issues near zeros of $Y(t)$ since $\Phi(t)= -X(t)/Y(t)$. Instead, using Lemma~\ref{L:formaliso}, we see that we have identities in $\laurent{\QQ}{t}$: $x( \exp_{\cE}(t)) = \wp(t)$ and $y(\exp_{\cE}(t)) = \tfrac12 \wp'(t)$.
Combining part (a) with~\eqref{E:XYPhi}, we obtain formal identities in $\laurent{\QQ}{t}$,
\begin{equation} \label{E:wp}
\wp \Biggl( \sum_{n=1}^{\infty} \frac{a_n}{n}t^n \Biggr) = X(t), \quad
\frac12 \wp' \Biggl(  \sum_{n=1}^{\infty} \frac{a_n}{n}t^n \Biggr) = Y(t).
\end{equation}
Our desired identity follows from \eqref{E:Lrapid} by substituting $t=e^{-2\pi/\sqrt{N}}$. We note that this is the same as letting $t=e^{2\pi i\tau}$ with $\tau=i/\sqrt{N}$ from the upper half-plane.
\end{proof}

\begin{remark} \label{R:wp}
Even when convergence is an issue in Corollary~\ref{C:logalg1}(b), one finds that
\[
\wp\Bigl( \tfrac12 L(E_0/\QQ,1) \Bigr) = x\bigl(\phi(i/\sqrt{N}) \bigr),
\]
where as usual $\wp$ is extended to a meromorphic function on~$\CC$.
\end{remark}

Our main result constitutes the following power series identities.

\begin{theorem} \label{T:Main}
For $E_0/\QQ$ a strong Weil curve of conductor $N$, let $E/\QQ$, $\wp$, $\cE/\power{\QQ}{t_1,t_2}$, $\exp_{\cE}(t)$, and $\Phi(t)$ be chosen as above. Let $\beta = \sum_{k=0}^d m_k u^k \in \ZZ[u]$.
\begin{itemize}
\item[(a)] We have the identity of formal power series in $\power{\QQ[u]}{t}$,
\[
\exp_{\cE} \Biggl( \sum_{n=1}^{\infty} \frac{a_n \beta(u^n)}{n}\,t^n \Biggr) = \sideset{}{_\cE}\sum_{k=0}^d [m_k]_{\cE}\bigl( \Phi(u^k t) \bigr),
\]
where $\sum_{\cE}$ indicates that the sum is taken with respect to the formal group law~$\cE$.
\item[(b)] Let $P(t) \assign (X(t),Y(t)) \in E(\laurent{\QQ}{t})$. Then in $\laurent{\QQ(u)}{t}$,
\[
\wp \Biggl(  \sum_{n=1}^{\infty} \frac{a_n \beta(u^n)}{n}\,t^n \Biggr) = x \Biggl( \sideset{}{_E}\sum_{k=0}^d [m_k]_E \bigl( P(u^kt) \bigr) \Biggr),
\]
where $\sum_E$ indicates that the sum is taken with respect to the group law on~$E$.
\end{itemize}
\end{theorem}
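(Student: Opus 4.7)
The plan is to reduce both parts directly to tools already assembled: for (a), to the homomorphism property of $\exp_{\cE}$ together with Corollary~\ref{C:logalg1}(a); for (b), to Lemma~\ref{L:formaliso} applied to part~(a).

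For part~(a), the first step is to rewrite the interior series. Since $\beta(u^n) = \sum_{k=0}^d m_k u^{kn}$, Fubini on formal sums gives
\[
\sum_{n=1}^{\infty} \frac{a_n \beta(u^n)}{n}\, t^n \;=\; \sum_{k=0}^d m_k \sum_{n=1}^{\infty} \frac{a_n}{n}(u^k t)^n \;=\; \sum_{k=0}^d m_k \lambda(u^k t)
\]
in $\power{\QQ[u]}{t}$, where each $\lambda(u^k t)$ has zero constant term in $t$ so all compositions that follow are legal. Now I would invoke Lemma~\ref{L:morphism} applied to $\exp_{\cE} : \Gahat \to \cE$ (whose derivative at $0$ is $1$): it is a formal group homomorphism, so it converts ordinary sums into $\cE$-sums and ordinary integer multiples $m_k \cdot s$ into $[m_k]_{\cE}$. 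Applying $\exp_{\cE}$ therefore yields
\[
\exp_{\cE}\!\Biggl(\sum_{k=0}^d m_k \lambda(u^k t)\Biggr) \;=\; \sideset{}{_\cE}\sum_{k=0}^d [m_k]_{\cE}\bigl(\exp_{\cE}(\lambda(u^k t))\bigr).
\]
Corollary~\ref{C:logalg1}(a) identifies $\exp_{\cE}(\lambda(s)) = \Phi(s)$ in $\power{\QQ}{s}$, and substituting $s = u^k t$ produces the stated formula.

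For part~(b), write $S(t) \assign \sum_{n \geqslant 1} a_n \beta(u^n) t^n / n$. From the proof of Corollary~\ref{C:logalg1} we have the formal identity $\wp(z) = x(\exp_{\cE}(z))$ in $\laurent{\QQ}{z}$, so specializing $z \mapsto S(t)$ (legal since $S$ has no constant term) yields $\wp(S(t)) = x(\exp_{\cE}(S(t)))$. By part~(a),
\[
\wp(S(t)) \;=\; x\Biggl( \sideset{}{_\cE}\sum_{k=0}^d [m_k]_{\cE}\bigl(\Phi(u^k t)\bigr) \Biggr).
\]
To translate this into the group law on $E$, apply Lemma~\ref{L:formaliso}: the map $\alpha(t) \mapsto (x(\alpha(t)), y(\alpha(t)))$ is a group isomorphism from $\cE$ to the appropriate subgroup of $E(\laurent{\QQ(u)}{t})$. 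By~\eqref{E:XYPhi}, it sends $\Phi(u^k t)$ to $(X(u^k t), Y(u^k t)) = P(u^k t)$, and because it respects the group structure it sends $[m_k]_{\cE}$ to $[m_k]_E$ and $\sum_{\cE}$ to $\sum_E$. Since the map is defined by the $x$-coordinate, the two $x$-values agree, producing the right-hand side of~(b).

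The argument needs no new algebraic idea beyond what is already in place; the main care point is bookkeeping. One must track that the various substitutions always take place in the correct ring, namely $\power{\QQ[u]}{t}$ for~(a) and its Laurent extension $\laurent{\QQ(u)}{t}$ for~(b), and that every series plugged into $\exp_{\cE}$, $\wp$, $x(\,\cdot\,)$, $y(\,\cdot\,)$, or $[m_k]_{\cE}$ has the zero constant term required for the composition to be defined. Once that formal setup is fixed, the homomorphism property of $\exp_{\cE}$ and the coordinate isomorphism of Lemma~\ref{L:formaliso} do all the work.
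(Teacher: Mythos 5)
Your proposal is correct and follows essentially the same route as the paper: part (a) via the decomposition $\sum_n a_n\beta(u^n)t^n/n = \sum_k m_k\lambda(u^kt)$, the homomorphism property of $\exp_{\cE}$, and Corollary~\ref{C:logalg1}(a); part (b) by taking $x$-coordinates, using $\wp(z)=x(\exp_{\cE}(z))$ and transferring the $\cE$-sum to an $E$-sum via Lemma~\ref{L:formaliso} and~\eqref{E:XYPhi}. No discrepancies to note.
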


\begin{proof}
Part~(a) is a consequence of Corollary~\ref{C:logalg1}. We recall that since $\exp_{\cE} : \Gahat \to \cE$ is a morphism of formal groups, for $g$, $h \in t\cdot \power{\QQ[u]}{t}$, we have $\exp_{\cE}(g + h) = \cE(g,h)$ and likewise for $m \in \ZZ$, we have $\exp_{\cE}(mg) = [m]_{\cE}(\exp_{\cE}(g))$. We then observe that
\[
\sum_{n=1}^{\infty} \frac{a_n \beta(u^n)}{n} t^n = \sum_{k=0}^d m_k \sum_{n=1}^{\infty} \frac{a_n}{n} \bigl( u^k t \bigr)^n.
\]
By applying $\exp_{\cE}$ to both sides and using Corollary~\ref{C:logalg1}, we obtain~(a). For part~(b), we take the $x$-coordinate of both sides of~(a) and use~\eqref{E:wp}. If $g \in t\cdot \power{\QQ[u]}{t}$, then $x(\Phi(g)) = X(g)$ by~\eqref{E:XYPhi}, which leads to the expression on the right side of~(b).
\end{proof}

\section{Special \texorpdfstring{$L$}{L}-values} \label{S:Examples}

As in \cite{And96}, the identities in Corollary~\ref{C:logalg1} and Theorem~\ref{T:Main} can be used to recover information about $L(E_0/\QQ,1)$ or $L(E_0/\QQ,\chi,1)$, for a Dirichlet character $\chi$, when these $L$-values are non-vanishing, by specializing at certain values of $u$ and $t$ and for judicious choices of $\beta$. As mentioned in \S\ref{S:Intro}, the special value identities we obtain are closely related to those obtained previously (e.g., see \cite{Cremona}*{\S 2.8--2.12, App.\ Ex.~1}), but here our goal is to highlight how special $L$-value formulas can be reflected in formal power series identities.

We continue with the notation of the previous section, and let $\HH$ be the upper half-plane in $\CC$. We assume that the sign of the functional equation of $L(E_0/\QQ,s)$ is $\varepsilon = {+1}$. Recall that $\phi: X_0(N) \to E$ is the modular parametrization of $E$. Since $\varepsilon={+1}$, it is well-known using the Atkin-Lehner $w_N$ operator that for $\tau \in \HH$,
\begin{equation} \label{E:wN}
\phi(w_N(\tau)) = \phi \biggl( -\frac{1}{N\tau} \biggr) = [-1]_E(\phi(\tau)) + \phi(0),
\end{equation}
where the image $\phi(0)$ of the cusp $0$ is necessarily a torsion point on $E$ (e.g., see \cite{Birch75}*{\S 2}). Since $\tau = i/\sqrt{N}$ is fixed by $w_N$, we have
\begin{equation} \label{E:2Ptorsion}
[2]_E \circ \phi\biggl( \frac{i}{\sqrt{N}} \biggr) = \phi(0) \in E(\QQ)_{\tor}.
\end{equation}
By Corollary~\ref{C:logalg1}(b), we see that $\wp(\frac12 L(E_0/\QQ,1))$ represents the $x$-coordinate of $\phi(i/\sqrt{N})$ and thus a torsion point. Letting $d$ be its order, we see that
\begin{equation} \label{E:L1inlattice}
L(E_0/\QQ,1) \in \frac{2\Omega}{d} \cdot \ZZ,
\end{equation}
where $\Omega$ is the positive real period of $E$. We can pin down the exact value by computing the value of $L(E_0/\QQ,1)$ to enough precision so that its value can be determined from the fact that $2\Omega/d \cdot \ZZ$ is a discrete subset of $\RR$. See Example~\ref{Ex:one}.

In a similar manner, and by adapting the approach of~\cite{And96}, we can evaluate twists $L(E_0/\QQ,\chi,1)$ by applying Theorem~\ref{T:Main}. It is somewhat more complicated, relying on calculations with Heegner points, which we exhibit in Examples~\ref{Ex:two} and~\ref{Ex:three}. 

The computations of the exact values of $L(E_0/\QQ,1)$ and $L(E_0/\QQ,\chi,1)$ in Examples~\ref{Ex:one} and~\ref{Ex:two} are essentially the same as what can be found in \cite{Cremona}*{\S 2.8--2.11}. They are warm-ups for Example~\ref{Ex:three}, which requires additional considerations and where the point of view of Theorem~\ref{T:Main} is most useful. Calculations were performed using PARI~\cite{PARI}.

\begin{example} \label{Ex:one}
Let $E_0=X_0(11) : y^2+y=x^3-x^2-10x-20$ be the strong Weil curve of conductor $11$, with Hecke eigenform in $S_2(\Gamma_0(11))$,
\[
f = \prod_{m=1}^{\infty} ( 1 - q^m)^2(1-q^{11m})^2 = q- 2q^2 - q^3 + 2q^4 + q^5 + \cdots \rassign \sum_{n=1}^{\infty} a_n q^n.
\]
The sign of the functional equation of $L(E_0/\QQ,s) = L(f,s)$ is $\varepsilon = {+1}$. Applying the change of coordinates in~\eqref{E:coordchange}, we obtain
\[
E : y^2 = x^3 - \frac{31}{3} x - \frac{2501}{108}.
\]
As formal series, we have
\[
\wp(z) = \frac{1}{z^2} + \frac{31}{15}z^2+\frac{2501}{756}z^4 + \frac{961}{675}z^6 + \frac{77531}{41580}z^8 \cdots,
\]
and by Proposition~\ref{P:ExpE},
\[
\exp_{\cE}(z) = -\frac{2\wp(z)}{\wp'(z)} = z + \frac{62}{15}z^5 + \frac{2501}{252}z^7
+ \frac{1922}{135}z^9 + \cdots.
\]
We compute
\begin{align*}
X(t) &= \frac{1}{t^2} + \frac{2}{t} + \frac{11}{3} + 5t + 8t^2 + t^3 + 7t^4 + \cdots, \\
Y(t) &= -\frac{1}{t^3} - \frac{3}{t^2} - \frac{7}{t} - \frac{25}{2} -17t - 26t^2 -19t^3 + \cdots,
\end{align*}
and so
\[
\Phi(t) = -\frac{X(t)}{Y(t)} = t - t^2 - \frac{1}{3}t^3 + \frac{1}{2}t^4 + \frac{13}{3}t^5 - \frac{61}{3}t^6 + \frac{529}{12} t^7 + \cdots.
\]
We find that
\[
\Phi\bigl( e^{-2\pi/\sqrt{11}} \bigr) \approx 0.1270624598...,
\]
which corresponds to the point
\[
P = \bigl( X\bigl( e^{-2\pi/\sqrt{11}} \bigr), Y\bigl( e^{-2\pi/\sqrt{11}} \bigr) \bigr) \approx (62.111554..., -488.826947...). 
\]
We calculate $2P \approx (15.666666..., -60.499999...)$, and since by~\eqref{E:2Ptorsion},
\begin{equation} \label{E:Ex2Pphi0}
2P = \phi(0)  \in E(\QQ)_{\tor} = \{ O,\, (14/3,\, \pm 11/2),\, (47/3,\, \pm 121/2)\},
\end{equation}
we find
\begin{equation} \label{E:Ex2P}
2P = (47/3,\, -121/2).
\end{equation}
Since $P$ itself does not approximate any of the elements of $E(\QQ)_{\tor}$, we see that $P$ has order~$10$. By \eqref{E:Lrapid} we obtain $L(E_0/\QQ,1) \approx 0.2538418608...$,
and we can approximate that the minimal positive real period of $E$ is $\Omega \approx 1.2692093042...$. By \eqref{E:L1inlattice}, we see that $L(E_0/\QQ,1) \in (2\Omega/10)\ZZ$, and since this is a discrete set, by comparing approximations we find that
\begin{equation}
L(E_0/\QQ,1) = \frac{\Omega}{5}.
\end{equation}
\end{example}

\begin{example} \label{Ex:two}
We continue with the notation of Example~\ref{Ex:one}. Let $\chi : \ZZ \to \{0, \pm 1\}$ be the quadratic Dirichlet character for $\QQ(\sqrt{-3})$, and let
\[
L(E_0/\QQ,\chi,s) = \sum_{n=1}^{\infty} \chi(n) a_n n^{-s},
\]
be the corresponding quadratic twist of $L(E_0/\QQ,s)$. By~\cite{Cremona}*{Prop.~2.11.2},
\begin{equation} \label{E:LE0chi1}
L(E_0/\QQ,\chi,1) = 2 \sum_{n=1}^{\infty} \frac{\chi(n) a_n}{n} e^{-2\pi n/3\sqrt{11}},
\end{equation}
from which we obtain $L(E_0/\QQ,\chi,1) \approx 1.6844963329...$.
Now let $\beta = u-u^2$, and let $\rho = e^{2\pi i/3}$. It is a quick calculation to determine that
\[
\beta(\rho^n) = \rho^n - \rho^{-n} = \chi(n) \cdot \sqrt{-3}.
\]
We note that together with \eqref{E:LE0chi1} this implies,
\begin{equation}
\sum_{n=1}^{\infty} \frac{a_n \beta(u^n)}{n} t^n \biggm|_{u=\rho,\, t=\exp(-2\pi/3\sqrt{11})} = \frac{\sqrt{-3}}{2} L(E_0/\QQ,\chi,1).
\end{equation}
From Theorem~\ref{T:Main}(b), letting $P(t) = (X(t),Y(t)) \in E(\laurent{\QQ}{t})$ we see that
\[
\wp \Biggl( \sum_{n=1}^{\infty} \frac{a_n \beta(u^n)}{n} t^n \Biggr)
= x\bigl( P(ut) - P(u^2t) \bigr),
\]
where the difference on the right is calculated in $E(\laurent{\QQ(u)}{t})$. From this we obtain
\begin{equation} \label{E:wpLE0chi1}
\wp \biggl( \frac{\sqrt{-3}}{2} L(E_0/\QQ,\chi,1) \biggr)
= x\Bigl( P\bigl(\rho\cdot e^{-2\pi/3\sqrt{11}} \bigr) - P \bigl(\rho^{-1}\cdot e^{-2\pi/3\sqrt{11}}\bigr) \Bigr).
\end{equation}
Let
\[
Q \assign P \bigl(\rho \cdot e^{-2\pi/3\sqrt{11}} \bigr) \approx
(-2.055777... + i\cdot 1.071828...,
-0.336526... - i \cdot 1.905429...)
\]
and note $\oQ = P (\rho^{-1} \cdot e^{-2\pi/3\sqrt{11}})$,
where $\oQ$ is the complex conjugate of $Q$ in $E(\CC)$. Now
\[
\rho \cdot \exp \biggl( -\frac{2\pi}{3\sqrt{11}}\biggr)
= \exp \biggl( 2\pi i \biggl( \frac{1}{3} + \frac{\sqrt{-11}}{33} \biggr) \biggr),
\]
and so $\tau = 1/3 + \sqrt{-11}/33 \in \HH$ represents the Heegner point $Q \assign P(\rho \cdot e^{-2\pi/3\sqrt{11}}) \in E_0(\overline{\QQ})$. As $\tau$ is a root of $33x^2 - 22x + 4$, this is a Heegner point of discriminant $-44$. Moreover, in the notation of~\cite{Gross84}, 
\[
Q =  \phi \bigl( \ZZ[\sqrt{-11}], (\sqrt{-11}), [(11+\sqrt{-11},33)] \bigr),
\]
where $\ZZ[\sqrt{-11}]$ is the order of discriminant $-44$, $(\sqrt{-11}) \subseteq \ZZ[\sqrt{-11}]$ is an ideal of norm $11$, and $[(11+\sqrt{-11},33)]$ is an ideal class in the class group of $\ZZ[\sqrt{-11}]$.
What is important to note here is that by \cite{Gross84}*{Eq.~(5.2)}, $Q$ is fixed under the induced Atkin-Lehner involution on~$E$:
\[
w_{11}(Q) = \phi \bigl( \ZZ[\sqrt{-11}], (-\sqrt{-11}), \bigl[ (11+\sqrt{-11},33)\cdot (\sqrt{-11})^{-1} \bigr] \bigr) = Q.
\]
Therefore, as in~\eqref{E:wN}, \eqref{E:Ex2Pphi0}, and~\eqref{E:Ex2P}, we obtain
\[
2Q = \phi(0) = (47/3,\, -121/2).
\]
For $\oQ$ we start with $\tau = -1/3 + \sqrt{-11}/33$, and in the same manner find that $2\oQ=\phi(0)$ as well. Therefore, $Q-\oQ$ is a $2$-torsion point, and since $Q \neq \oQ$, its order is~$2$. By~\cite{SilvermanATAEC}*{Thm.~V.2.3} (and its proof), since $E(\RR)$ has a single component, we can write the period lattice of $E$ as $\ZZ\Omega + \ZZ\Omega'$, where $\Omega$ is the minimal positive real period and $\re(\Omega') = \frac12\Omega$, ($\im(\Omega') \approx -1.4588166169...$). From \eqref{E:wpLE0chi1}, as $(\sqrt{-3}/2)L(E_0/\QQ,\chi,1)$ is purely imaginary, we then have
\[
\frac{\sqrt{-3}}{2} L(E_0/\QQ,\chi,1) \in \frac{\Omega - 2\Omega'}{2} \cdot \ZZ,
\]
and by comparing approximations we find exactly
\begin{equation}
L(E_0/\QQ,\chi,1) = \frac{\Omega-2\Omega'}{\sqrt{-3}}.
\end{equation}

A few comments are in order.
(1) The specializations on the right-hand side of~\eqref{E:wpLE0chi1} converge in $\CC$, and the right-hand side is well-defined because $\wp$ is meromorphic on all of~$\CC$. However, the value $(\sqrt{-3}/2)L(E_0/\QQ,\chi,1)$ is outside of the radius of converge of the power series for $\wp(z)$ centered at $z=0$. On the other hand, $(\sqrt{-3}/2)L(E_0/\QQ,\chi,1) + \Omega'$ is within the radius of convergence, so by shifting the series for $\wp(z)$ by $\Omega'$, the identity in~\eqref{E:wpLE0chi1} holds.

(2) By taking the quadratic twist of $E_0$ by $-3$, we arrive at the strong Weil curve $F_0: y^2+y = x^3-3x-5$ of conductor $99$. Then $L(E_0/\QQ,\chi,s)= L(F_0/\QQ,s)$, and these same calculations can proceed as in Example~\ref{Ex:one}. Indeed $(\Omega - 2\Omega')/\sqrt{-3}$ turns out to be the real period associated to $F_0$. In Example~\ref{Ex:three} we consider the case of a cubic character where this method is not available and where we can generalize the present example.
\end{example}

\begin{example} \label{Ex:three}
We continue with the notation of the previous two examples, and now let~$\psi$ be the cubic Dirichlet character modulo $7$, satisfying for $\rho=e^{2\pi i/3}$,
\[
\psi(1)=1,\quad \psi(2)=\rho^2, \quad \psi(3)=\rho, \ldots.
\]
For $\zeta=e^{2\pi i/7}$, we let $g(\psi) = \sum_{j=1}^6 \psi(j)\zeta^j$ be the associated Gauss sum, and we set
\[
C_{\psi} \assign \psi(-11) \frac{g(\psi)}{g(\opsi)} = \rho \cdot \frac{g(\psi)}{g(\opsi)}, \quad 
C_{\opsi} \assign \opsi(-11) \frac{g(\opsi)}{g(\psi)} = \rho^2 \cdot \frac{g(\opsi)}{g(\psi)}
\]
Then $C_{\psi}$ is the sign of the functional equation of $L(f,\psi,s)$ by~\cite{Iwaniec}*{Thm.~7.6}, so that if $\Lambda(E_0/\QQ,\psi,s) = (7\sqrt{11}/2\pi)^s \Gamma(s) L(E_0/\QQ,\psi,s)$, then
\[
\Lambda(E_0/\QQ,\psi,s) = C_{\psi} \Lambda(E_0/\QQ,\opsi,2-s).
\]
In a similar manner to the derivation of~\eqref{E:LE0chi1} in~\cite{Cremona}*{Prop.~2.11.2}, one finds
\begin{equation} \label{E:LE0psi1}
L(E_0/\QQ,\psi,1) = S_{\psi} + C_{\psi} S_{\opsi},
\end{equation}
where
\begin{equation}
S_{\psi} \assign \sum_{n=1}^{\infty} \frac{\psi(n) a_n}{n}e^{-2\pi n/7\sqrt{11}}, \quad
S_{\opsi} \assign \sum_{n=1}^{\infty} \frac{\opsi(n) a_n}{n}e^{-2\pi n/7\sqrt{11}}.
\end{equation}
This leads to the approximation $L(E_0/\QQ,\psi,1) \approx 1.997106... + i \cdot 1.328439...$.
Setting $\gamma = \sum_{j=1}^6 \psi(j) u^j$ and $\ogamma = \sum_{j=1}^6 \opsi(j) u^j$, the theory of Gauss sums implies that
\[
\gamma(\zeta^k) = \opsi(k) g(\psi), \quad
\ogamma(\zeta^k) = \psi(k) g(\opsi).
\]
We form polynomials in $\ZZ[u]$,
\begin{align*}
\beta_1 &= \gamma + \ogamma = 2u - u^2 - u^3 - u^4 - u^5 + 2u^6, \\
\beta_2 &= \frac{1}{\sqrt{-3}}( \gamma - \ogamma ) = -u^2 +u^3 + u^4 - u^5,
\end{align*}
and by setting
\[
T_{i} \assign \sum_{n=1}^{\infty} \frac{a_n \beta_i(u^n)}{n} t^n \biggm|_{u=\zeta,\,t=\exp(-2\pi/7\sqrt{11})} \quad i=1,\,2,
\]
we find that
\[
T_1 = g(\opsi) S_{\psi} + g(\psi) S_{\opsi}, \quad
T_2 = -\frac{g(\opsi)}{\sqrt{-3}} S_{\psi} + \frac{g(\psi)}{\sqrt{-3}} S_{\opsi}.
\]
Both $T_1$ and $T_2$ are in $\RR$, and by comparing with~\eqref{E:LE0psi1}, we obtain
\begin{equation} \label{E:LE0psi1normalized}
(1-\sqrt{-3}) g(\opsi) L(E_0/\QQ,\psi,1) = T_1 - 3 T_2.
\end{equation}
With this in mind, we define
\begin{equation} \label{E:betadef}
\beta \assign \beta_1 - 3\beta_2 = 2u + 2u^2 - 4u^3 - 4u^4 + 2u^5 + 2u^6,
\end{equation}
and
\begin{equation} \label{E:cLdef}
T \assign T_1 - 3T_2 = \sum_{n=1}^{\infty} \frac{a_n \beta(u^n)}{n} t^n \biggm|_{u=\zeta,\,t=\exp(-2\pi/7\sqrt{11})} \quad \in \RR.
\end{equation}
Letting $P_k = P(\zeta^k \cdot e^{-2\pi/7\sqrt{11}}) \in E(\overline{\QQ})$, we see that
\[
P_k = \phi\biggl( \frac{k}{7} + \frac{\sqrt{-11}}{77} \biggr), \quad 1 \leq k \leq 6.
\]
For $k=1, \dots, 6$, the number $\tau_k = k/7 + \sqrt{-11}/77$ is a root of $539x^2 - 154kx + 11k^2+1$, and so each $\tau_k$ represents a Heegner point of discriminant $-2156$. In the notation of \cite{Gross84},
\[
P_k = \phi \bigl( \ZZ[7\sqrt{-11}], (11,7\sqrt{-11}), [(77k+7\sqrt{-11},539)] \bigr),
\]
where $\ZZ[7\sqrt{-11}]$ is the order of discriminant $-2156$, $\fn \assign (11,7\sqrt{-11})$ is an ideal in this order of norm~$11$, and $[\fa_k]$ represents the ideal class of $\fa_k \assign (77k+7\sqrt{-11},539)$. Through computations using PARI~\cite{PARI}, the ideal class $[\fa_1] \in \mathrm{Cl}(\ZZ[7\sqrt{-11}])$ has order~$8$, and furthermore,
\[
[\fa_1]^2 = [\fa_4], \quad [\fa_1]^3 = [\fa_2], \quad [\fa_1]^4=[\fn], \quad
[\fa_1]^5 = [\fa_5], \quad [\fa_1]^6 = [\fa_3], \quad [\fa_1]^7=[\fa_6].
\]
By \cite{Gross84}*{Eq.~(5.2)}, the induced action of the Atkin-Lehner operator on $P_k$ is
\[
w_{11}(P_k) = \phi\bigl(\ZZ[7\sqrt{-11}], \fn, [\fa_k \fn] \bigr),
\]
and so
\[
w_{11}(P_1) = P_5, \quad w_{11}(P_2) = P_6, \quad w_{11}(P_3) = P_4.
\]
By \eqref{E:betadef} and \eqref{E:cLdef}, when we apply Theorem~\ref{T:Main}(b) to $\beta$, we have as in Example~\ref{Ex:two},
\begin{align*}
\wp(T) &= x(2P_1 + 2P_2 - 4P_3 - 4P_4 + 2P_5 + 2P_6) \\
&= x\bigl( 2P_1 + 2w_{11}(P_1) + 2P_2 + 2w_{11}(P_2) - 4P_3 - 4w_{11}(P_3)\bigr),
\end{align*}
and so $(\wp(T),\tfrac12 \wp'(T)) = O$ by~\eqref{E:wN}. Therefore, by~\eqref{E:LE0psi1normalized} and the fact that $T \in \RR$, we see
\[
(1- \sqrt{-3}) g(\opsi) L(E_0/\QQ,\psi,1) \in \Omega \cdot \ZZ,
\]
and by comparing approximations, we find that the multiple on the left-hand side is~$10\Omega$. After some rearrangement we obtain the identity
\begin{equation}
L(E_0/\QQ,\psi,1) = \frac{5}{14} (1+\sqrt{-3}) g(\psi) \Omega,
\end{equation}
which aligns with \cite{Shimura77}*{Thm.~1} but provides the precise algebraic multiple of~$\Omega$.
\end{example}

With appropriate modifications to these examples, one could in principle determine the special value $L(E_0/\QQ,\psi,1)$ exactly for an arbitrary elliptic curve and Dirichlet character, e.g., using the techniques of \cite{And96}*{\S 4.7}. This would be interesting to carry out, but without more specific information about the particular curve and its Heegner points, these log-algebraic methods are limited and determining the values beyond what is already known qualitatively in~\cite{Shimura77}*{Thm.~1} would be difficult.

\begin{bibdiv}
\begin{biblist}

\bib{And94}{article}{
   author={Anderson, G. W.},
   title={Rank one elliptic $A$-modules and $A$-harmonic series},
   journal={Duke Math. J.},
   volume={73},
   date={1994},
   number={3},
   pages={491--542},
}

\bib{And96}{article}{
   author={Anderson, G. W.},
   title={Log-algebraicity of twisted $A$-harmonic series and special values of $L$-series in characteristic $p$},
   journal={J. Number Theory},
   volume={60},
   date={1996},
   number={1},
   pages={165--209},
}

\bib{AndThak90}{article}{
   author={Anderson, G. W.},
   author={Thakur, D. S.},
   title={Tensor powers of the Carlitz module and zeta values},
   journal={Ann. of Math. (2)},
   volume={132},
   date={1990},
   number={1},
   pages={159--191},
}

\bib{AnglesNgoDacTavares20}{article}{
   author={Angl\`es, B.},
   author={Ngo Dac, T.},
   author={Tavares Ribeiro, F.},
   title={On special $L$-values of $t$-modules},
   journal={Adv. Math.},
   volume={372},
   date={2020},
   pages={art. 107313, 33 pp.},
}

\bib{AnglesPellarinTavares16}{article}{
   author={Angl\`es, B.},
   author={Pellarin, F.},
   author={Tavares Ribeiro, F.},
   title={Arithmetic of positive characteristic $L$-series values in Tate algebras},
   note={With an appendix by F. Demeslay},
   journal={Compos. Math.},
   volume={152},
   date={2016},
   number={1},
   pages={1--61},
}

\bib{AnglesTavares17}{article}{
   author={Angl\`es, B.},
   author={Tavares Ribeiro, F.},
   title={Arithmetic of function field units},
   journal={Math. Ann.},
   volume={367},
   date={2017},
   number={1-2},
   pages={501--579},
}

\bib{Birch75}{article}{
   author={Birch, B. J.},
   title={Heegner points of elliptic curves},
   conference={
      title={Symposia Mathematica, Vol. XV},
      address={Convegno di Strutture in Corpi Algebrici, INDAM, Rome},
      date={1973},
   },
   book={
      publisher={Academic Press, London},
   },
   date={1975},
   pages={441--445},
}

\bib{BCDT01}{article}{
   author={Breuil, C.},
   author={Conrad, B.},
   author={Diamond, F.},
   author={Taylor, R.},
   title={On the modularity of elliptic curves over $\mathbf{Q}$: wild $3$-adic exercises},
   journal={J. Amer. Math. Soc.},
   volume={14},
   date={2001},
   number={4},
   pages={843--939},
}

\bib{BGKO13}{article}{
   author={Bringmann, K.},
   author={Guerzhoy, P.},
   author={Kent, Z.},
   author={Ono, K.},
   title={Eichler-Shimura theory for mock modular forms},
   journal={Math. Ann.},
   volume={355},
   date={2013},
   number={3},
   pages={1085--1121},
}

\bib{CEP18}{article}{
   author={Chang, C.-Y.},
   author={El-Guindy, A.},
   author={Papanikolas, M. A.},
   title={Log-algebraic identities on Drinfeld modules and special $L$-values},
   journal={J. Lond. Math. Soc. (2)},
   volume={97},
   date={2018},
   number={2},
   pages={125--144},
}

\bib{Cremona}{book}{
   author={Cremona, J. E.},
   title={Algorithms for Modular Elliptic Curves},
   edition={2},
   publisher={Cambridge University Press, Cambridge},
   date={1997},
}

\bib{GreenNgoDac20b}{article}{
  author={Green, N.},
  author={Ngo Dac, T.},
  title={On log-algebraic identities for Anderson $t$-modules and characteristic $p$ multiple zeta values},
  date={2020},
  eprint={arXiv:2007.11060},
}

\bib{GreenP18}{article}{
   author={Green, N.},
   author={Papanikolas, M. A.},
   title={Special $L$-values and shtuka functions for Drinfeld modules on elliptic curves},
   journal={Res. Math. Sci.},
   volume={5},
   date={2018},
   number={1},
   pages={art. 4, 47 pp.},
}

\bib{GriffinHales20}{article}{
  author={Griffin, M.},
  author={Hales, J.},
  title={Divisors of modular parametrizations of elliptic curves},
  date={2020},
  eprint={arXiv:2003:01675},
}

\bib{Gross84}{article}{
   author={Gross, B. H.},
   title={Heegner points on $X_0(N)$},
   conference={
      title={Modular Forms},
      address={Durham},
      date={1983},
   },
   book={
      series={Ellis Horwood Ser. Math. Appl.: Statist. Oper. Res.},
      publisher={Horwood, Chichester},
   },
   date={1984},
   pages={87--105},
}

\bib{Hazewinkel}{book}{
   author={Hazewinkel, M.},
   title={Formal Groups and Applications},
   note={Corrected reprint of the 1978 original},
   publisher={AMS Chelsea Publishing, Providence, RI},
   date={2012},
}

\bib{Honda68}{article}{
   author={Honda, T.},
   title={Formal groups and zeta-functions},
   journal={Osaka Math. J.},
   volume={5},
   date={1968},
   pages={199--213},
}

\bib{Honda70}{article}{
   author={Honda, T.},
   title={On the theory of commutative formal groups},
   journal={J. Math. Soc. Japan},
   volume={22},
   date={1970},
   pages={213--246},
}

\bib{Iwaniec}{book}{
   author={Iwaniec, H.},
   title={Topics in Classical Automorphic Forms},
   series={Graduate Studies in Mathematics},
   volume={17},
   publisher={American Mathematical Society, Providence, RI},
   date={1997},
}

\bib{P22}{article}{
   author={Papanikolas, M. A.},
   title={Hyperderivative power sums, Vandermonde matrices, and Carlitz multiplication coefficients},
   journal={J. Number Theory},
   volume={232},
   date={2022},
   pages={317--354},
}

\bib{PARI}{misc}{
   author={The PARI group},
   title={PARI/GP version 2.13.3},
   year={2021},
   address={Univ. Bordeaux},
   note={Available from https://pari.math.u-bordeaux.fr/},
}

\bib{Shimura77}{article}{
   author={Shimura, G.},
   title={On the periods of modular forms},
   journal={Math. Ann.},
   volume={229},
   date={1977},
   number={3},
   pages={211--221},
}

\bib{SilvermanATAEC}{book}{
   author={Silverman, J. H.},
   title={Advanced Topics in the Arithmetic of Elliptic Curves},
   series={Graduate Texts in Mathematics},
   volume={151},
   publisher={Springer-Verlag, New York},
   date={1994},
}

\bib{SilvermanAEC}{book}{
   author={Silverman, J. H.},
   title={The Arithmetic of Elliptic Curves},
   series={Graduate Texts in Mathematics},
   volume={106},
   edition={2},
   publisher={Springer, Dordrecht},
   date={2009},
}

\bib{Taelman12}{article}{
  author={Taelman, L.},
  title={Special $L$-values of Drinfeld modules},
  journal={Ann. of Math. (2)},
  volume={175},
  date={2012},
  number={1},
  pages={369--391},
}

\bib{TW95}{article}{
   author={Taylor, R.},
   author={Wiles, A.},
   title={Ring-theoretic properties of certain Hecke algebras},
   journal={Ann. of Math. (2)},
   volume={141},
   date={1995},
   number={3},
   pages={553--572},
}

\bib{Thakur92}{article}{
   author={Thakur, D. S.},
   title={Drinfeld modules and arithmetic in the function fields},
   journal={Internat. Math. Res. Notices},
   date={1992},
   number={9},
   pages={185--197},
}

\bib{Thakur}{book}{
   author={Thakur, D. S.},
   title={Function Field Arithmetic},
   publisher={World Scientific Publishing Co., Inc., River Edge, NJ},
   date={2004},
}

\bib{Washington}{book}{
   author={Washington, L. C.},
   title={Introduction to Cyclotomic Fields},
   series={Graduate Texts in Mathematics},
   volume={83},
   edition={2},
   publisher={Springer-Verlag, New York},
   date={1997},
}

\bib{Wiles95}{article}{
   author={Wiles, A.},
   title={Modular elliptic curves and Fermat's last theorem},
   journal={Ann. of Math. (2)},
   volume={141},
   date={1995},
   number={3},
   pages={443--551},
}

\end{biblist}
\end{bibdiv}

\end{document}